\newtheorem{thm}{Theorem}
\newtheorem{thm*}{Theorem}
\newtheorem{prop}[thm]{Proposition}
\theoremstyle{remark}
\newtheorem{remark}[thm]{Remark}
\theoremstyle{definition}
\newcommand{\R}{\mathbbm{R}}
\newcommand{\rd}{\mathrm{d}}
\title{Riesz energies and the magnitude of manifolds}
\author{Heiko Gimperlein, Magnus Goffeng}
\address{Heiko Gimperlein\newline
\indent Engineering Mathematics\newline
\indent Universit\"{a}t Innsbruck\newline
\indent Technikerstraße 13 \newline
\indent 6020 Innsbruck\newline
\indent Austria \newline\newline
\indent Magnus Goffeng,\newline
\indent Centre for Mathematical Sciences\newline 
\indent Lund University \newline 
\indent Box 118, SE-221 00 Lund\newline 
\indent Sweden\newline
}
\email{heiko.gimperlein@uibk.ac.at, magnus.goffeng@math.lth.se}
\begin{document}
\maketitle

\begin{abstract}
We study the geometric significance of Leinster's magnitude invariant. For closed manifolds we find a precise relation with Brylinski's beta function and therefore with classical invariants of knots and submanifolds. In the special case of compact homogeneous spaces we obtain an elementary proof that the residues of the beta function contain the same geometric information as the asymptotic expansion of the magnitude function. For general closed manifolds we use the recent pseudodifferential analysis of the magnitude operator to relate these via an interpolating polynomial family. Beyond manifolds, the relation with the Brylinski beta function allows to deduce unexpected properties of the magnitude function for the $p$-adic integers.
\end{abstract}

\section{Introduction}

Originally introduced in the context of (finite) enriched categories, the magnitude invariant has been shown to unify notions of ``size'' like the cardinality of a set, the length of an interval or the Euler characteristic of a triangulated manifold. An overview is given in \cite{leinmeck}. 
As a special case of the results in the current paper, we relate magnitude to the Riesz energy of a knot \cite{bryl99,ohara91,ohara18,ohara23}, an invariant also known as M\"{o}bius energy as it is famously preserved by M\"{o}bius transformations \cite{fhw94}.

The magnitude of metric spaces was proposed and studied by Leinster and Willerton, who viewed a metric space as a category enriched over $[0,\infty)$ \cite{leinster, leinwill}: A weight function on a finite metric space $(X,d)$ is a function $w : X \to \R$ which satisfies $\sum_{y \in X} \mathrm{e}^{-\rd(x,y)}w(y) =1$ for all $x \in X$. Given any weight function $w$, the magnitude of $X$ is defined as $\mathrm{Mag}(X) := \sum_{x \in X} w(x)$. Beyond the magnitude of an individual space $(X,d)$, it proves fruitful to study the magnitude function $\mathcal{M}_X(R) := \mathrm{Mag}(X, R\cdot d)$ for $R>0$. The magnitude invariant has been extended from finite metric spaces to compact metric spaces $(X,\rd)$ \cite{meckes,meckes2}, such as domains in $\mathbf{R}^n$ \cite{barcarbs,gimpgoff,gimpgoff2,leinwill}, homogeneous spaces \cite{will14}, or manifolds with boundary \cite{gimpgofflouc,gimpgofflouctech}.

A main open question about magnitude is which geometric properties of $X$ it determines. We here study the relationship between $\mathcal{M}_X(R)$ and objects defined from the Riesz energy, such as Brylinski beta functions and residues of manifolds. Similarities were first noted in \cite{gimpgofflouc}. In this article we find a one-to-one relation between these geometric invariants, as special cases of a one-parameter family $\mathcal{M}_X(R,\nu)$ of invariants. At $\nu=-1$, we recover the magnitude function $\mathcal{M}_X(R,-1)$, while at $\nu =+1$ the inverse Mellin transform of $\mathcal{M}_X(R,+1)$ is the Brylinski beta function. 

Our main result is deduced from the asymptotics for $R \to \infty$, which for an $n$-dimensional manifold $X$  take the form
$$\mathcal{M}_X(R,\nu)\sim \sum_{j=0}^\infty c_j(X,\nu)R^{n-j},$$ 
in combination with well-known relations between asymptotic expansions and {the poles of their Mellin transforms (see Theorem \ref{thm2})}.
For any $k$, we then show that the expansion coefficients $\{c_j(X,-1)\}_{j=0}^k$ of the magnitude function uniquely determine the coefficients $\{c_j(X,+1)\}_{j=0}^k$ of the beta function, and vice versa (see Theorems \ref{mainthm2} and \ref{thm3} for more general statements).

We first present this relation for compact homogeneous spaces $X=G/H$ of a locally compact group $G$ by a stabilizer subgroup $H$, endowed with an invariant distance function (see Theorems \ref{thm2} and \ref{mainthm2}). Based on the pseudodifferential analysis of the magnitude operator in \cite{gimpgofflouctech}, we then discuss extensions to classes of closed manifolds (see Theorem \ref{thm3}). 
As an application of our methods, we deduce properties of the magnitude function for the $p$-adic integers $X=\mathbb{Z}_p$, a compact group. We also illustrate the explicit relation between magnitude and residues for the sphere $X=S^n$, where both invariants have been studied separately \cite{ohara23,will14}.\\

\noindent {\bf Acknowledgements:} The authors thank Tom Leinster for fruitful discussions and Simon Willerton for comments on an early version of the manuscript. The second listed author was supported by the Swedish Research Council Grant VR 2018-0350.\\

\noindent \emph{Notation:} In this paper when we write  $\mathrm{Re}(R)\gg0$ or $\mathrm{Re}(R)\to \infty$, we tacitly assume that $R$ eventually belongs to a small sector around the positive real axis. By \cite{gimpgofflouctech}, for a manifold $X$ of dimension $n$ we can use the sector of opening angle $\pi/(n+1)$.

\section{Residues of manifolds}
\label{residuesec}

A well-studied invariant of a knot is the M\"{o}bius energy \cite{ohara91,ohara18}, which was proven to be M\"{o}bius invariant by Freedman-He-Wang \cite{fhw94}. More generally, one considers the Brylinski beta function \cite{bryl99,FV}, also known as the regularized Riesz $z$-energy or the meromorphic energy function. For a compact metric measure space $(X,\rd,\mu)$, it is the function of the complex variable $z$ defined by
\begin{equation}
\label{brylzeta}
B_X(z):=\int_X\int_X \rd(x,y)^z\rd\mu(x)\rd\mu(y).
\end{equation}
A priori, $B_X$ is a holomorphic function for $\mathrm{Re}(z)>0$. When $\rd^2$ is smooth near $x=y$, e.g.~for a Riemannian manifold, $B_X$ extends meromorphically to $\mathbb{C}$ by standard arguments from distribution theory. We define the residue function by 
$$R_X(z):=\mathrm{Res}_{s=z}B_X(s).$$
The domain of $R_X$ is the maximal connected domain of meromorphicity of $B_X$, where $R_X$ has locally finite support. 

To study $B_X$ and $R_X$, it proves useful 
to introduce the operator 
\begin{equation}
\label{magop}
\mathcal{Z}_X(R)f(x):=\int_X \mathrm{e}^{-R\rd(x,y)}f(y)\rd \mu(y),
\end{equation}
defined for suitable distributions $f$. As noted in  \cite[Section 3.5]{gimpgofflouc}, the support and the non-zero values of $R_X$ are determined from the asymptotic behavior as $\mathrm{Re}(R)\to \infty$ of the entire function $\mathfrak{m}_X:\mathbb{C}\to \mathbb{C}$ defined by
$$\mathfrak{m}_X:R\mapsto \langle \mathcal{Z}_X(R) 1,1\rangle_{L^2(X,\mu)} = \int_X\int_X \mathrm{e}^{-R\rd(x,y)}\rd\mu(x)\rd\mu(y).$$ 
Indeed, the (flipped) Brylinski beta function $B_X(-z)=\int_0^\infty \mathfrak{m}_X(R)R^{z-1}\rd R$ is the Mellin transform of $\mathfrak{m}_X$, and therefore the pole structure of $B_X$ relates to the asymptotic behaviour of $\mathfrak{m}_X$ as $R\to +\infty$, as we recall in the Appendix. If $X$ is a Riemannian manifold with boundary, $B_X$ and $R_X$ encode geometric information such as the volume, the measure of the boundary or integrals of curvatures \cite{ohara18,ohara23}.

\section{Magnitude and complex powers for compact geometries}
\label{lknland}

More recently, the magnitude function $\mathcal{M}_X(R)$ was introduced and studied for certain compact metric spaces. Starting from the magnitude operator $\mathcal{Z}_X(R)$ defined above, the magnitude function takes the form
\begin{equation}\label{magZ}\mathcal{M}_X(R):=\langle \mathcal{Z}_X(R)^{-1}1,1\rangle_{L^2(X,\mu)}.\end{equation}
Here, the inverse $\mathcal{Z}_X(R)^{-1}$ is interpreted appropriately \cite{gimpgofflouctech}. Using \eqref{magZ}, the magnitude operator $\mathcal{Z}_X(R)$ determines the magnitude function $\mathcal{M}_X(R)$ in all cases relevant to this paper \cite{gimpgofflouc,gimpgofflouctech}. More generally, the magnitude of a positive definite, compact metric spaces $X$ is defined as the supremum of the magnitudes of finite metric subspaces of $X$ \cite{meckes}.

The magnitude operator has previously been studied on different function spaces \cite{gimpgofflouc,meckes,meckes2,will14}. In particular, we note that for a closed $n$-dimensional manifold $X$, it defines an isomorphism $\mathcal{Z}_X(R):H^{-\frac{n+1}{2}}(X)\to H^{\frac{n+1}{2}}(X)$ when $\mathrm{Re}(R)\gg0$, provided that the distance function is sufficiently nice (e.g.~for subspace distances of submanifolds in Euclidean or hyperbolic space, see \cite{gimpgofflouc,gimpgofflouctech}). 

For the purposes of this paper, it suffices to consider $\mathcal{Z}_X(R)$ on $L^2(X,\mu)$ where it is a self-adjoint and compact operator as soon as $R>0$, because the kernel is continuous and symmetric. From functional analytic arguments, we therefore deduce that for $R>0$, the magnitude operator \eqref{magop} admits complex powers $\mathcal{Z}_X(R)^\nu$, $\nu\in \mathbb{C}$, with $1\in \cap_{\nu\in \mathbb{C}} \mathrm{Dom}(\mathcal{Z}_X(R)^\nu)$. This means that there exists a family of operators $\mathcal{Z}_X(R)^\nu$, $\nu\in \mathbb{C}$, satisfying the semigroup property $\mathcal{Z}_X(R)^{\nu_1}\mathcal{Z}_X(R)^{\nu_2}=\mathcal{Z}_X(R)^{\nu_1+\nu_2}$, with  $\mathcal{Z}_X(R)^1= \mathcal{Z}_X(R)$, $\mathcal{Z}_X(R)^{-1}= \mathcal{Z}_X(R)^{-1}$ (on $\overline{\mathcal{Z}_X(R)L^2(X,\mu)}$), and each  $\mathcal{Z}_X(R)^\nu$ is densely defined with $\mathrm{Dom}(\mathcal{Z}_X(R)^\nu)=L^2(X,\nu)$ for $\mathrm{Re}(\nu)\geq 0$.

Let us illustrate this discussion in the  case, where $X=G/H$ is a homogeneous space for a compact group $G$ with stabilizer (subgroup) $H$. We assume that $\rd$ is a $G$-invariant distance function on $X$ and $\mu$ is the normalized $G$-invariant measure on $X$. Since $G$ is compact, we can decompose $L^2(X,\mu)$ via representation theory as
\begin{equation}
\label{decom}
L^2(X,\mu)=\bigoplus_{\pi\in \hat{G}} \mathcal{H}_\pi^{\oplus n_\pi},
\end{equation}
where $\hat{G}$ denotes the discrete set of unitary equivalence classes of unitary representations $\pi:G\to U(\mathcal{H}_\pi)$ of $G$, and $n_\pi\in \mathbb{N}$ denotes the multiplicity of $\pi$ in $L^2(X,\mu)$. Each of the Hilbert spaces $\mathcal{H}_\pi$ is  finite-dimensional, because $G$ is compact. We write $\mathcal{H}_\pi^{\oplus n_\pi}=\mathbb{C}^{n_\pi}\otimes \mathcal{H}_\pi$ and identify the space of $G$-equivariant operators on $\mathcal{H}_\pi^{\oplus n_\pi}$ with the space $M_{n_\pi}(\mathbb{C})$ of $n_\pi\times n_\pi$-matrices. Since $\mathcal{Z}_X(R)$ is a $G$-equivariant compact operator on $L^2(X,\mu)$, under the decomposition \eqref{decom} this operator decomposes as 
$$\mathcal{Z}_X(R)=\bigoplus_{\pi\in \hat{G}} \mathfrak{Z}_\pi(R)\otimes 1_{\mathcal{H}_\pi}.$$
Here, $\mathfrak{Z}_\pi:\mathbb{C}\to M_{n_\pi}(\mathbb{C})$ is an entire function and for each $R\in \mathbb{C}$, $\|\mathfrak{Z}_\pi(R)\|\to 0$ as $\pi\to \infty$ in $\hat{G}$. We now define the complex powers of $\mathcal{Z}_X(R)$ using the functional calculus for matrices,
$$\mathcal{Z}_X(R)^\nu=\bigoplus_{\pi\in \hat{G}} \mathfrak{Z}_\pi(R)^\nu\otimes 1_{\mathcal{H}_\pi}.$$
A similar argument allows us to construct complex powers of $\mathcal{Z}_X(R)$ in the more general setting of a compact homogeneous space $G/H$, where $G$ is not necessarily compact, if we replace the use of representation theory by the spectral theory for compact operators. A relevant such example is given by $G=SL(2,\mathbb{R})$ and $H$ a surface group.

The discussion in Section \ref{residuesec} (that will be made precise below in Theorem \ref{mainthm2}) shows that the residues of a manifold $X$ are determined by the asymptotic behavior of $\mathfrak{m}_X(R) = \langle \mathcal{Z}_X(R)1,1\rangle_{L^2(X,\mu)}$. On the other hand, the magnitude function is computed as $\mathcal{M}_X(R) = \langle \mathcal{Z}_X(R)^{-1}1,1\rangle_{L^2(X,\mu)}$. The next result interpolates these two invariants.
\begin{thm}
\label{thm1}
Let $(X,\rd,\mu)$ be a compact metric measure space of the following form:
\begin{itemize}
\item $X=G/H$ is a homogeneous space, $\rd$ a $G$-invariant distance function and $\mu$ a normalized invariant measure; or
\item $X$ is a compact smooth manifold, $\rd$ a distance function with $\rd^2$ smooth on $X\times X$ and regular at the diagonal (see \cite[Definition 2.2]{gimpgofflouctech}) and $\mu$ the volume density defined from $\rd^2$ at the diagonal (as in \cite[Section 2]{gimpgofflouctech}).
\end{itemize}
Then the function 
$$\mathcal{M}_X(R,\nu):=\langle \mathcal{Z}_X(R)^\nu1,1\rangle_{L^2(X,\mu)}$$
is holomorphic for $\mathrm{Re}(R)\gg 0$ and $\nu\in \mathbb{C}$. 
\end{thm}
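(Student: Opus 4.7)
The key observation is that the constant function $1\in L^2(X,\mu)$ is an eigenvector of $\mathcal{Z}_X(R)$ for every $R$: because $\rd$ is $G$-invariant and $\mu$ is the normalized invariant measure, the function $x\mapsto \int_X e^{-R\rd(x,y)}\,\rd\mu(y)$ is $G$-invariant, hence constant, and pairing with $1$ identifies this constant with $\mathfrak{m}_X(R)$. Consequently $1$ lies in the one-dimensional, trivial-representation summand of $L^2(X,\mu)$ on which $\mathcal{Z}_X(R)$ acts by the scalar $\mathfrak{m}_X(R)$, so the matrix functional calculus of the paper gives $\mathcal{Z}_X(R)^\nu 1=\mathfrak{m}_X(R)^\nu\cdot 1$ and therefore
$$\mathcal{M}_X(R,\nu)=\mathfrak{m}_X(R)^\nu.$$
The entire function $\mathfrak{m}_X$ is strictly positive on $(0,\infty)$, and its leading-order behaviour $\mathfrak{m}_X(R)\sim c_0 R^{-n}$ with $c_0>0$ (available on the sector of opening $\pi/(n+1)$ around the positive axis by \cite{gimpgofflouctech}) shows that $\mathfrak{m}_X(R)\neq 0$ for $\mathrm{Re}(R)\gg 0$. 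Fixing the principal branch of the logarithm then makes $\mathfrak{m}_X(R)^\nu=e^{\nu\log\mathfrak{m}_X(R)}$ manifestly jointly holomorphic in $(R,\nu)$, which is the assertion in this case.

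\textbf{General manifold case.} Here $1$ is no longer an eigenvector, and I would rely on the Dunford--Riesz functional calculus. By \cite{gimpgofflouctech}, for $\mathrm{Re}(R)\gg 0$ the operator $\mathcal{Z}_X(R)$ is an isomorphism on the appropriate Sobolev scale and is compact on $L^2$ with eigenvalues accumulating only at $0$; moreover $1\in C^\infty(X)$ has rapidly decaying coefficients along the spectral basis. I would define
$$\mathcal{Z}_X(R)^\nu=\frac{1}{2\pi i}\oint_\Gamma \lambda^\nu(\lambda-\mathcal{Z}_X(R))^{-1}\,\rd\lambda$$
along a contour $\Gamma$ encircling the eigenvalues of modulus $\geq\varepsilon$ and avoiding the branch cut of $\lambda^\nu$, supplemented by the convergent spectral expansion on the tail of small eigenvalues; on the smooth vector $1$ paired with $1$, both contributions converge, and the construction extends the one stipulated for $R>0$ in the paper.

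\textbf{Holomorphy and the main obstacle.} Holomorphy in $\nu$ is immediate from that of $\lambda^\nu$, and holomorphy in $R$ reduces to the holomorphic dependence of the resolvent on the operator-valued map $R\mapsto \mathcal{Z}_X(R)$, which is entire since its kernel $e^{-R\rd(x,y)}$ is entire in $R$. The main technical obstacle, in my view, is choosing $\Gamma$ locally uniformly in $R$: one needs to verify that no eigenvalue of $\mathcal{Z}_X(R)$ crosses $\Gamma$ as $R$ varies slightly, and that the branch of $\lambda^\nu$ remains consistent. This should follow from analytic perturbation theory for the compact analytic family $\{\mathcal{Z}_X(R)\}$, combined with the uniform sectoral asymptotics of \cite{gimpgofflouctech} that pin down the location of the eigenvalues for $\mathrm{Re}(R)\gg 0$. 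Given such a contour, differentiation under the integral sign together with the rapid decay of the spectral coefficients of $1$ yields local uniform convergence, hence holomorphy in $R$; joint holomorphy then follows from Hartogs' theorem.
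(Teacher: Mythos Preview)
Your treatment of the homogeneous case is essentially the paper's argument: $1$ spans the trivial-representation summand, $\mathcal{Z}_X(R)$ acts there by the scalar $\mathfrak{m}_X(R)$, and hence $\mathcal{M}_X(R,\nu)=\mathfrak{m}_X(R)^\nu$. One minor caveat: your justification of non-vanishing via the asymptotic $\mathfrak{m}_X(R)\sim c_0R^{-n}$ presupposes that $X$ is a manifold, which the homogeneous hypothesis does not (cf.\ the $\mathbb{Z}_p$ example later in the paper, where no such expansion exists).

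For the manifold case the paper takes a different, more structural route. It invokes from \cite{gimpgofflouctech} that $R^{-1}\mathcal{Z}_X(R)$ is an elliptic pseudodifferential operator \emph{with parameter} $R$, holomorphic in $R$, and then appeals to Shubin's construction of complex powers of such operators \cite[Chapter~II]{shubinbook} to conclude that $R^{-\nu}\mathcal{Z}_X(R)^\nu$ is a holomorphic family in $\nu$ of elliptic $\psi$DOs with parameter. All of the analytic difficulties---spectral location, branch cuts, uniformity in $R$---are absorbed into this existing machinery, and holomorphy of the pairing with $1$ is then immediate.

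Your Dunford--Riesz approach is a different route, but the sketch has a genuine gap beyond the contour-stability issue you flag. Once $R$ is complex, $\mathcal{Z}_X(R)$ is no longer self-adjoint, so there is no guaranteed orthonormal spectral basis and your ``convergent spectral expansion on the tail of small eigenvalues'' is not a priori available. Even for real $R$, the assertion that $1$ has ``rapidly decaying coefficients'' along the eigenbasis---rapid enough to dominate $|\lambda_j|^{-\mathrm{Re}(\nu)}$ for \emph{every} $\nu$---is exactly a statement about the Sobolev mapping properties of $\mathcal{Z}_X(R)$, i.e.\ the $\psi$DO structure from \cite{gimpgofflouctech}. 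So the contour-integral argument does not really bypass the paper's input; it ultimately leans on the same pseudodifferential facts while adding the burden of controlling the contour and the tail by hand.
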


\begin{proof}
We prove the two cases separately. We start to consider the case of $X$ a compact homogeneous space. For notational simplicity we assume that $X=G/H$ for $G$ compact. We write $\pi_0:G\to U(\mathbb{C})$ for the trivial $G$-representation $\pi_0(g)=1$ in dimension $1$. We note that $n_{\pi_0}=1$ and the constant function $1\in L^2(X,\mu)$ is an orthonormal basis for the summand $\mathcal{H}_{\pi_0}=\mathbb{C}$ in the decomposition of Equation \eqref{decom}. Therefore, with $eH\in G/H$ the coset of the identity element $e\in G$,
$$\mathfrak{Z}_{\pi_0}(R)=\langle \mathcal{Z}_X(R)1,1\rangle_{L^2(X,\mu)}=\int_X \mathrm{e}^{-R\rd(eH,y)}\rd\mu(y),$$
which defines an entire function $\mathbb{C}\to M_1(\mathbb{C})=\mathbb{C}$. It is clear from the construction that $\mathcal{M}_X(R,\nu)=\langle \mathcal{Z}_X(R)^\nu1,1\rangle_{L^2(X,\mu)}=\mathfrak{Z}_{\pi_0}(R)^\nu$ is holomorphic for $\mathrm{Re}(R)\gg 0$ and $\nu\in \mathbb{C}$.

The detailed proof of Theorem \ref{thm1} for closed manifolds is technically more involved. It is an immediate consequence of the following series of observations, for which we refer to the literature. Firstly, if $\rd$ is a distance function with $\rd^2$ smooth on $X\times X$ and regular at the diagonal it has Property (SMR) (in the sense of \cite[Defintion 3.3]{gimpgofflouctech}), using the argument in \cite[Proposition 3.4]{gimpgofflouctech}. Secondly, we note that then $R^{-1}\mathcal{Z}_X(R)$ by definition is an elliptic pseudodifferential operator with parameter $R$, on which it depends holomorphically, see \cite[Corollary 3.5]{gimpgofflouctech}. Thirdly, following standard techniques as in \cite[Chapter II]{shubinbook}, the preceding observation implies that the complex powers $R^{-\nu}\mathcal{Z}_X(R)^\nu$ form a holomorphic family in $\nu$ of elliptic pseudodifferential operators with parameter $R$, on which it depends holomorphically. This implies that $\langle \mathcal{Z}_X(R)^\nu1,1\rangle_{L^2(X,\mu)}$ is holomorphic for $\mathrm{Re}(R)\gg 0$ and $\nu\in \mathbb{C}$. 
\end{proof}

\section{Homogeneous spaces}
\label{sectionhomos}

Let $X=G/H$ be a compact homogeneous space, $\rd$ an invariant distance function and $\mu$ the normalized invariant measure. From the proof of Theorem \ref{thm1} we conclude the following.

\begin{prop}
\label{mxandmx}
The function $\mathcal{M}_X(R,\nu)$ takes the form 
$$\mathcal{M}_X(R,\nu)=\mathfrak{m}_X(R)^\nu,$$
where 
$$\mathfrak{m}_X(R)=\langle \mathcal{Z}_X(R)1,1\rangle_{L^2(X,\mu)}=\int_X \mathrm{e}^{-R\rd(eH,y)}\rd\mu(y).$$
\end{prop}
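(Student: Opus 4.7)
The proposition is essentially a corollary of what was already established in the proof of Theorem \ref{thm1} for the homogeneous case, so my plan is to make that implicit derivation explicit. The core observation is that the constant function $1 \in L^2(X,\mu)$ lives entirely inside a single, one-dimensional summand of the representation-theoretic decomposition \eqref{decom}, and that the magnitude operator acts on this summand as a scalar.

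First, I would invoke the decomposition $L^2(X,\mu) = \bigoplus_{\pi \in \hat G} \mathcal{H}_\pi^{\oplus n_\pi}$ from \eqref{decom}. Because $\mu$ is $G$-invariant and normalized, the constant function $1$ is a unit vector transforming in the trivial representation $\pi_0$; moreover $n_{\pi_0} = 1$ and $\dim \mathcal{H}_{\pi_0} = 1$, so $\mathbb{C}\cdot 1$ is exactly the $\pi_0$-isotypic component. Next, since the kernel $e^{-R\rd(x,y)}$ is $G$-invariant in both arguments (as $\rd$ is $G$-invariant), $\mathcal{Z}_X(R)$ is a $G$-equivariant operator and thus decomposes as $\bigoplus_\pi \mathfrak{Z}_\pi(R) \otimes 1_{\mathcal{H}_\pi}$ in the notation of Theorem \ref{thm1}. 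On the trivial summand this reduces to multiplication by the scalar
$$\mathfrak{Z}_{\pi_0}(R) = \langle \mathcal{Z}_X(R)1,1\rangle_{L^2(X,\mu)} = \int_X \mathrm{e}^{-R\rd(eH,y)}\,\rd\mu(y) = \mathfrak{m}_X(R),$$
where $G$-invariance of $\mu$ and of $\rd$ is used to reduce the double integral defining $\mathfrak{m}_X(R)$ to the single integral above (the inner integral is independent of the base point $x\in X$).

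Then I would pass to complex powers. By the construction of $\mathcal{Z}_X(R)^\nu$ recalled in the discussion preceding Theorem \ref{thm1}, complex powers are defined summand-by-summand via the matrix functional calculus, so
$$\mathcal{Z}_X(R)^\nu = \bigoplus_{\pi \in \hat G} \mathfrak{Z}_\pi(R)^\nu \otimes 1_{\mathcal{H}_\pi},$$
and on the one-dimensional summand $\mathcal{H}_{\pi_0}$ the $\nu$-th power is just the scalar $\mathfrak{m}_X(R)^\nu$. Pairing against $1$ (which is entirely contained in this summand and orthogonal to the others) yields
$$\mathcal{M}_X(R,\nu) = \langle \mathcal{Z}_X(R)^\nu 1,1\rangle_{L^2(X,\mu)} = \mathfrak{m}_X(R)^\nu,$$
which is the claim.

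I do not expect any serious obstacle here: the only subtleties are the unambiguous choice of branch for $\mathfrak{m}_X(R)^\nu$ (which is harmless because $\mathfrak{m}_X(R) > 0$ for $R>0$ real, and by Theorem \ref{thm1} the right-hand side is a priori holomorphic in a sector with $\mathrm{Re}(R) \gg 0$, so we pick the principal branch there and extend) and the treatment of the non-compact $G$ case, where one replaces the discrete decomposition \eqref{decom} by the spectral decomposition of the compact self-adjoint operator $\mathcal{Z}_X(R)$; the $1$-dimensional $G$-invariant subspace spanned by $1$ is still an eigenspace with eigenvalue $\mathfrak{m}_X(R)$, and the same argument goes through.
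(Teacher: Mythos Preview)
Your proposal is correct and is exactly the argument the paper intends: the proposition is stated immediately after Theorem \ref{thm1} with the remark that it follows from that proof, and your write-up simply makes explicit the steps already carried out there (identifying $1$ with the trivial summand $\mathcal{H}_{\pi_0}$, computing $\mathfrak{Z}_{\pi_0}(R)=\mathfrak{m}_X(R)$, and taking complex powers summand-by-summand). Your additional remarks on the branch of the power and the non-compact $G$ case are sound and go slightly beyond what the paper spells out, but do not change the approach.
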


The argument leading to Proposition \ref{mxandmx}  closely relates to the argument in \cite{will14}. The case $\nu=-1$ is called Speyer's homogeneous magnitude theorem in \cite[Theorem 1]{will14}. We combine this line of thought with  Proposition \ref{relarlandlna} in the Appendix, which we apply to the function $e=\mathfrak{m}_X$. The assumptions from Proposition \ref{relarlandlna} are satisfied near $R=0$, because $\mathfrak{m}_X$ is entire with Taylor series 
\begin{equation}
\label{mxatzero}
\mathfrak{m}_X(R)=\sum_{j=0}^\infty B_j R^j, \quad B_j=\frac{(-1)^j}{j!} \int_X \rd(eH,y)^j\rd\mu(y).
\end{equation}
We therefore conclude the following statement for compact homogeneous spaces.

%

\begin{thm}
\label{thm2}
Let $X=G/H$ be a compact homogeneous space, $\rd$ an invariant distance function and $\mu$ a normalized invariant measure. Further, let $\gamma\in \R$ and $N\in \mathbb{N}$. For the function $\mathcal{M}_X(R,\nu)=\mathfrak{m}_X(R)^\nu$ the following are equivalent:
\begin{enumerate}
\item $\mathfrak{m}_X$ admits an asymptotic expansion as $\mathrm{Re}(R)\to +\infty$
$$\mathfrak{m}_X(R)=\sum_{j=0}^Na_jR^{\gamma-j}+O(R^{\gamma-N-1}), \qquad \quad \text{as $R\to +\infty$}.$$
\item Uniformly for $\nu$ in any compact subset of $\mathbb{C}$ we have  asymptotic expansions 
$$\mathcal{M}_X(R,\nu)=\sum_{j=0}^N \alpha_j(\nu)R^{\gamma \nu-j}+O(R^{\gamma\nu-N-1}), \quad \mbox{as $\mathrm{Re}(R)\to \infty$}.$$
\item For some $\nu\neq 0$ we have an asymptotic expansion 
$$\mathcal{M}_X(R,\nu)=\sum_{j=0}^N \alpha_j(\nu)R^{\gamma \nu-j}+O(R^{\gamma\nu-N-1}), \quad \mbox{as $\mathrm{Re}(R)\to \infty$}.$$
\item The Brylinski beta function \eqref{brylzeta} extends meromorphically to the half-plane $\mathrm{Re}(z)> \gamma-N-1$ with simple poles, and for any $M$ we have 
$$\Gamma(-z)B_X(z)=\sum_{j=0}^N \frac{b_j}{z-\gamma+ j} + \sum_{j=0}^M \frac{B_j}{z- j}+f_{N,M}(z),$$
where $f_{N,M}$ is holomorphic in the strip $\gamma-N-1<\mathrm{Re}(z)<M+1$ and $B_j=(-1)^jB_X(j)/j!$ as in Equation \eqref{mxatzero}.
\end{enumerate}
\end{thm}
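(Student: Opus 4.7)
The plan is to exploit Proposition \ref{mxandmx}, which identifies $\mathcal{M}_X(R,\nu)=\mathfrak{m}_X(R)^\nu$, together with the Mellin-transform dictionary of Proposition \ref{relarlandlna} in the Appendix. Under this identification, statements (1), (2), (3) all become assertions about asymptotic expansions at $R\to+\infty$ of $\nu$-th powers of the single scalar function $\mathfrak{m}_X$, while (4) is a Mellin-transform reformulation of (1) combined with the Taylor data at $R=0$ from \eqref{mxatzero}.

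To prove the circular chain $(1)\Rightarrow(2)\Rightarrow(3)\Rightarrow(1)$, my main tool is the generalized binomial expansion. Assuming $a_0\neq 0$ (otherwise decrease $\gamma$), I would factor $\mathfrak{m}_X(R)=a_0 R^\gamma(1+u(R))$ with $u(R)=O(R^{-1})$, and compute
$$\mathcal{M}_X(R,\nu)=a_0^\nu R^{\gamma\nu}\sum_{k=0}^N \binom{\nu}{k}u(R)^k+O(R^{\gamma\nu-N-1}),$$
expanding each $u(R)^k$ as a Laurent polynomial in $R^{-1}$ plus a remainder and collecting terms of like order. This yields (2) with $\alpha_0(\nu)=a_0^\nu$ and $\alpha_j(\nu)$ a polynomial in $\nu$ for $j\geq 1$; uniformity on compact subsets of $\nu$ follows from boundedness of $\binom{\nu}{k}$ there. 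The implication $(2)\Rightarrow(3)$ is immediate, and $(3)\Rightarrow(1)$ is the same binomial argument applied to $\mathfrak{m}_X(R)=\mathcal{M}_X(R,\nu_0)^{1/\nu_0}$ for a fixed $\nu_0\neq 0$, noting that $\alpha_0(\nu_0)=a_0^{\nu_0}\neq 0$ since $\mathfrak{m}_X$ is positive for $R>0$.

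For the equivalence $(1)\Leftrightarrow(4)$, I would combine the identity $d(x,y)^{-z}=\Gamma(z)^{-1}\int_0^\infty R^{z-1}\mathrm{e}^{-R\,d(x,y)}\rd R$ (valid for $\mathrm{Re}(z)>0$ and then extended meromorphically) with Fubini to obtain $\Gamma(-z)B_X(z)=\int_0^\infty R^{-z-1}\mathfrak{m}_X(R)\rd R$, which is, up to the substitution $s\mapsto -z$, the Mellin transform of $\mathfrak{m}_X$. Invoking Proposition \ref{relarlandlna} of the Appendix with $e=\mathfrak{m}_X$, fed with the Taylor expansion \eqref{mxatzero} at $R=0$ and the asymptotic expansion from (1) at $R\to+\infty$, produces exactly the pole decomposition in (4): simple poles at $z=\gamma-j$ with residues $b_j$ (equal to the asymptotic coefficients $a_j$) and at $z=j$ with residues determined by the $B_j$ of \eqref{mxatzero}, plus a remainder $f_{N,M}$ holomorphic in the indicated strip.

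The main technical delicacy is uniformity of the binomial expansion in $\nu$ on compact sets and a consistent choice of branch for $\mathfrak{m}_X(R)^\nu$. Both are resolved by the observation that, as $\mathrm{Re}(R)\to\infty$ within the admissible sector, $\mathfrak{m}_X(R)\sim a_0 R^\gamma$ stays inside a sector around the positive real axis, so the principal branch of the $\nu$-th power is well-defined and holomorphic in $\nu$, and the remainder bounds in the binomial expansion are controlled uniformly in $\nu$ on compact subsets of $\mathbb{C}$.
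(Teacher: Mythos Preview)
Your proposal is correct and follows essentially the same approach as the paper. The paper's proof consists of the two sentences preceding the theorem statement: it invokes Proposition~\ref{mxandmx} and then applies the Mellin dictionary of Proposition~\ref{relarlandlna} with $e=\mathfrak{m}_X$, the Taylor data~\eqref{mxatzero} supplying the behavior near $R=0$; your binomial argument for the chain $(1)\Leftrightarrow(2)\Leftrightarrow(3)$ is exactly what the paper formalizes immediately afterward in Proposition~\ref{mobvingpoweow} via the polynomials $g_j$.
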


\begin{remark}
\label{aldnlknad}
If any of the equivalent statements of Theorem \ref{thm2} holds, then $a_j=b_j$ and 
$$B_X(z)=\sum_{j=0}^N \frac{1}{\Gamma(-\gamma+j)}\frac{b_j}{z-\gamma+ j} + \tilde{f}_N(z),$$  
where $\tilde{f}_{N}$ is holomorphic in the half-plane $\mathrm{Re}(z)> \gamma-N-1$. We shall see below that when $X=G/H$ is a smooth compact manifold of dimension $n$, the statements of Theorem \ref{thm2} hold for $\gamma=-n$ and any $N$. Note, in particular, that we have re-proven the well known result that the Riesz energy of manifolds has poles at $-n,-n-1,-n-2,\dots$.
\end{remark}

Let us describe in more detail how the coefficients in the asymptotic expansion of $\mathcal{M}_X(R,\nu)=\mathfrak{m}_X(R)^\nu$ depend on $\nu$.

\begin{prop}
\label{mobvingpoweow}
Assume that any of the equivalent statements in Theorem \ref{thm2} holds. Then the coefficients $\alpha_j(\nu)$ are computed from $a_0,\ldots, a_j$ by 
\begin{align}
\label{knklnlknad}
\alpha_j(\nu)=&a_0^\nu g_j\left(\nu,\frac{a_1}{a_0},\frac{a_2}{a_0},\ldots, \frac{a_j}{a_0}\right),
\end{align}
where $g_j=g_j(t_1,\ldots, t_j)$ are polynomials of degree $j$ in $\nu$ and homogeneous of degree $j$ in $(t_1,t_2,\ldots, t_j)$ (for $t_j$ having degree $j$) which are determined by the formal Taylor expansion
$$\left(1+\sum_{l=1}^{\infty} t_lx^{l}\right)^\nu=\sum_{j=0}^\infty g_j(\nu,t_1,\ldots, t_j)x^j.$$
We have $\alpha_j(1)=a_j$ and for $\nu,\nu'\neq 0$ we obtain $\alpha_j(\nu+\nu')$ from $\alpha_0(\nu'), \alpha_1(\nu'), \ldots, \alpha_j(\nu')$ as 
$$\alpha_j(\nu+\nu')=\alpha_0(\nu')^\nu g_j\left(\nu,\frac{\alpha_1(\nu')}{\alpha_0(\nu')},\frac{\alpha_2(\nu')}{\alpha_0(\nu')},\ldots, \frac{\alpha_j(\nu')}{\alpha_0(\nu')}\right).$$
\end{prop}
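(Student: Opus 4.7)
The plan is to reduce everything to the asymptotic expansion of a single scalar function raised to a complex power. By Proposition \ref{mxandmx}, in the homogeneous setting we have $\mathcal{M}_X(R,\nu)=\mathfrak{m}_X(R)^\nu$, so the task is simply to expand $\mathfrak{m}_X(R)^\nu$ in powers of $R^{-1}$. I would begin by factoring out the leading order, writing
$$\mathfrak{m}_X(R)=a_0R^\gamma\bigl(1+\epsilon(R)\bigr),\qquad \epsilon(R)\sim\sum_{l\geq 1}\frac{a_l}{a_0}R^{-l},$$
and then expand $(1+\epsilon(R))^\nu$ via the binomial series.

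Next, I would observe that in the sector around the positive real axis fixed by the paper's conventions, $\epsilon(R)\to 0$ as $\mathrm{Re}(R)\to \infty$, so that $|\epsilon(R)|<1$ for $R$ large. The binomial series $(1+u)^\nu=\sum_{k\geq 0}\binom{\nu}{k}u^k$ then converges, uniformly in $\nu$ on compact subsets of $\mathbb{C}$ once $|u|$ is small enough. Substituting $u=\epsilon(R)$ and expanding each $\epsilon(R)^k$ by the multinomial theorem against the asymptotic expansion of $\epsilon(R)$ yields, at the formal level, precisely the defining generating function identity
$$\left(1+\sum_{l=1}^{\infty}t_lx^l\right)^\nu=\sum_{j=0}^\infty g_j(\nu,t_1,\ldots,t_j)x^j$$
with $x=R^{-1}$ and $t_l=a_l/a_0$, so the coefficient of $R^{-j}$ is read off as $g_j(\nu,a_1/a_0,\ldots,a_j/a_0)$. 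Truncating at order $N$ and bookkeeping the remainders (a finite sum of $O(R^{-N-1})$ contributions, uniform in $\nu$ on a compact set) would give formula (\ref{knklnlknad}).

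The algebraic properties of $g_j$ should fall out of the generating function definition. Only the terms $\binom{\nu}{k}(\sum t_lx^l)^k$ with $k\leq j$ contribute to the $x^j$-coefficient, so $g_j$ is a polynomial of degree at most $j$ in $\nu$; the top term $\binom{\nu}{j}t_1^j$ shows the degree is exactly $j$. The weighted homogeneity follows by substituting $x\mapsto\lambda x$ in the identity, which forces $g_j(\nu,\lambda t_1,\ldots,\lambda^jt_j)=\lambda^j g_j(\nu,t_1,\ldots,t_j)$. The identity $\alpha_j(1)=a_j$ is immediate from $g_j(1,t_1,\ldots,t_j)=t_j$, which can be read off from $(1+\sum t_lx^l)^1$.

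For the second displayed identity, the key observation is that by the semigroup property of complex powers one has $\mathcal{M}_X(R,\nu')^\mu=\mathfrak{m}_X(R)^{\nu'\mu}=\mathcal{M}_X(R,\nu'\mu)$. I would therefore reapply the expansion argument above, but now with the asymptotic series for $\mathcal{M}_X(R,\nu')$ as the base (so that $\alpha_0(\nu')R^{\gamma\nu'}$ plays the role of $a_0R^\gamma$ and $\alpha_l(\nu')$ replaces $a_l$), producing the claimed expression for $\alpha_j(\nu'\mu)$ in terms of $\alpha_l(\nu')$ and then fixing $\mu$ to match the target exponent. The only mildly technical point throughout is the uniform control of the remainders in $R$ and $\nu$, but this is standard once one works inside the sector where the binomial series converges; I do not expect a deeper obstacle.
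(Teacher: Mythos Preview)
Your approach is essentially the paper's: factor out the leading term and read off the $g_j$ from the defining generating function, and for the second identity exploit the composition $(\mathfrak{m}_X^{\nu'})^\mu=\mathfrak{m}_X^{\nu'\mu}$ together with uniqueness of Taylor coefficients. The paper's proof is purely formal---it works at the level of Taylor polynomials and records the composition law $G_{j,\nu_1}\circ G_{j,\nu_2}=G_{j,\nu_1\nu_2}$ rather than invoking convergence of the binomial series---but the substance is the same.

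One remark: what this argument actually yields is the \emph{multiplicative} relation
\[
\alpha_j(\nu\nu')=\alpha_0(\nu')^{\nu}\, g_j\!\left(\nu,\tfrac{\alpha_1(\nu')}{\alpha_0(\nu')},\ldots,\tfrac{\alpha_j(\nu')}{\alpha_0(\nu')}\right),
\]
and your phrase ``fixing $\mu$ to match the target exponent'' does not convert this into the additive $\nu+\nu'$ printed in the statement (already for $j=0$ one would need $a_0^{\nu+\nu'}=a_0^{\nu\nu'}$). The paper's own proof likewise establishes only the multiplicative composition law, so the ``$\nu+\nu'$'' in the displayed formula is evidently a typo for ``$\nu\nu'$''; you should state and prove the multiplicative version rather than fudge the match.
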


\begin{proof}
Note that for given $\nu\neq 0$ the values of $(t_1,t_2,\ldots,t_j)$ are determined by the values $(g_j(\nu,t_1,\ldots, t_k))_{k=1}^j$. As the following arguments show, for a fixed $\nu\neq0$ the map 
$$G_{j,\nu}:\R^j\to \R^j, (t_1,\ldots, t_j)\mapsto (g_j(\nu,t_1,\ldots, t_k))_{k=1}^j,$$
is a polynomial diffeomorphism with inverse $G_{j,\nu}^{-1}=G_{j,\nu^{-1}}$. In fact, for $\nu_1,\nu_2\in \mathbb{C}^\times$ we have 
$$G_{j,\nu_1}\circ G_{j,\nu_2}=G_{j,\nu_1\nu_2}.$$
This identity follows from uniqueness of Taylor polynomials and the identities
\begin{align*}
\left(1+\sum_{k=1}^j t_kx^k\right)^{\nu_1\nu_2}+&O(x^{j+1})=\left(1+\sum_{k=1}^j g_k(\nu_2, t_1,\ldots, t_k)x^k\right)^{\nu_1}+O(x^{j+1})=\\
&=1+\sum_{k=1}^j g_k(\nu_1,g_1(\nu_2, t_1), \ldots g_k(\nu_2,t_1,\ldots, t_k))x^k+O(x^{j+1}),\\
\left(1+\sum_{k=1}^j t_kx^k\right)^{\nu_1\nu_2}+&O(x^{j+1})=1+\sum_{k=1}^j g_k(\nu_1\nu_2, t_1,\ldots, t_k)x^k+O(x^{j+1}).
\end{align*}
\end{proof}

\begin{remark}
The polynomial $g_j=g_j(t_1,\ldots, t_j)$ is given by
$$g_j(\nu,t_1,\ldots, t_j)=\sum_{\substack{\mathbbm{k}=(k_1,\ldots,k_j)\in \mathbb{N}^{j},\\ \sum_l lk_l=j}}\frac{\nu(\nu-1)\cdots (\nu-\sum_{l=1}^j k_l+1)}{\prod_{l=1}^j k_l!}\prod_{l=1}^j t_l^{k_l}.$$
For the first few values of $j$ we find 
\begin{align*}
g_0(\nu)=&1,\\
g_1(\nu,t_1)=&\nu t_1,\\
g_2(\nu,t_1,t_2)=& \frac{\nu(\nu-1)}{2}t_1^2+\nu t_2,\\
g_3(\nu,t_1,t_2,t_3)=& \frac{\nu(\nu-1)(\nu-2)}{6}t_1^3+\nu(\nu-1)t_1t_2+\nu t_3.
\end{align*}
\end{remark}

Proposition \ref{mobvingpoweow} allows to move back and forth between asymptotic expansions of $\mathcal{M}_X(R,\nu)$ for different $\nu$. We present this conclusion as a relation between the magnitude function $\mathcal{M}_X(R)$ and its generalization $\mathcal{M}_X(R,\nu)$.

\begin{thm}
\label{mainthm2}
Let $X=G/H$ be a compact homogeneous space, $\rd$ an invariant distance function and $\mu$ a normalized invariant measure. Take $\gamma\in \R$ and $N\in \mathbb{N}$. The following are equivalent: 
\begin{enumerate}
\item There exists an asymptotic expansion as $\mathrm{Re}(R)\to +\infty$
$$\mathcal{M}_X(R)\equiv \mathfrak{m}_X(R)^{-1}=\sum_{j=0}^Nc_jR^{-\gamma-j}+O(R^{\gamma-N-1}).$$
\item Uniformly for $\nu$ in any compact subset of $\mathbb{C}$ there exists an asymptotic expansion as $\mathrm{Re}(R)\to +\infty$
$$\mathcal{M}_X(R,\nu)=\sum_{j=0}^N \alpha_j(\nu)R^{\gamma \nu-j}+O(R^{\gamma\nu-N-1}).$$
\end{enumerate}
If any of the above statements holds, then $c_0,c_1,\ldots ,c_j$ polynomially determine $\alpha_0(\nu), \alpha_1(\nu), \ldots, \alpha_j(\nu)$ via 
$$\alpha_j(\nu)=c_0^{\nu+1} g_j\left(\nu+1,\frac{c_1}{c_0},\frac{c_2}{c_0},\ldots, \frac{c_j}{c_0)}\right).$$
Conversely, $\alpha_0(\nu), \alpha_1(\nu), \ldots, \alpha_j(\nu)$ polynomially determine $c_0, c_1,\ldots, c_j$ from the formulas
$$c_j=\alpha_0(\nu)^{-\nu-1} g_j\left(-\nu-1,\frac{\alpha_1(\nu)}{\alpha_0(\nu)},\frac{\alpha_2(\nu)}{\alpha_0(\nu)},\ldots, \frac{\alpha_j(\nu)}{\alpha_0(\nu)}\right).$$
\end{thm}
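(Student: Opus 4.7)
My plan is to reduce Theorem \ref{mainthm2} to Proposition \ref{mxandmx}, Theorem \ref{thm2}, and Proposition \ref{mobvingpoweow}. The key bridge is that Proposition \ref{mxandmx} gives $\mathcal{M}_X(R,\nu)=\mathfrak{m}_X(R)^\nu$ on a compact homogeneous space; in particular $\mathcal{M}_X(R)=\mathcal{M}_X(R,-1)$, so that statement (1) of Theorem \ref{mainthm2} is literally the $\nu=-1$ instance of statement (3) of Theorem \ref{thm2}, under the identification $c_j=\alpha_j(-1)$. The equivalence of (1) and (2) in Theorem \ref{mainthm2} then falls out of the already-established equivalence (2) $\Leftrightarrow$ (3) of Theorem \ref{thm2}: the direction (1) $\Rightarrow$ (2) is (3) $\Rightarrow$ (2) at $\nu=-1$, while (2) $\Rightarrow$ (1) is (2) $\Rightarrow$ (3) specialized to $\nu=-1$. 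No further asymptotic analysis is required.

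For the explicit coefficient formulas, I would specialize the semigroup identity $G_{j,\nu_1}\circ G_{j,\nu_2}=G_{j,\nu_1\nu_2}$ established in the proof of Proposition \ref{mobvingpoweow}. Taking one parameter equal to $-1$ turns the input tuple into $(c_1/c_0,\ldots,c_j/c_0)$ via the observation that $\alpha_k(-1)/\alpha_0(-1)=c_k/c_0$, and choosing the other parameter so that the product lands on the target parameter produces $\alpha_j(\nu)$ as $\alpha_0(-1)$ to an appropriate power times $g_j$ evaluated at that tuple, after a straightforward bookkeeping of the leading factor. This is the first displayed formula. For the converse, I would use that each map $G_{j,\nu}$ is a polynomial diffeomorphism with polynomial inverse $G_{j,\nu}^{-1}=G_{j,1/\nu}$, also established in Proposition \ref{mobvingpoweow}, to read off $c_j$ from $\alpha_0(\nu),\ldots,\alpha_j(\nu)$.

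I do not anticipate a substantive obstacle: the result is essentially a repackaging of Theorem \ref{thm2} and Proposition \ref{mobvingpoweow}. The only minor point is to verify that the clause ``some $\nu\neq 0$'' in statement (3) of Theorem \ref{thm2} may be taken to be $\nu=-1$, which is immediate. Everything else reduces to algebraic substitution into the defining identity of the polynomials $g_j$ coming from the formal binomial series.
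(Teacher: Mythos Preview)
Your proposal is correct and mirrors the paper's own route: the paper does not give a separate proof of Theorem~\ref{mainthm2} but presents it as the immediate consequence of Proposition~\ref{mxandmx}, Theorem~\ref{thm2} (the equivalence $(2)\Leftrightarrow(3)$ at $\nu=-1$), and the transition formulas of Proposition~\ref{mobvingpoweow}, exactly as you outline. The only cosmetic difference is that the paper's displayed coefficient formulas come from the last identity in Proposition~\ref{mobvingpoweow} with $\nu'=-1$, whereas you phrase the same substitution via the composition law $G_{j,\nu_1}\circ G_{j,\nu_2}=G_{j,\nu_1\nu_2}$; both are the same bookkeeping.
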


\section{Expansion coefficients on smooth manifolds}

We now consider the existence of the asymptotic expansions assumed in Section \ref{sectionhomos} beyond homogeneous spaces. Based on the results of \cite{gimpgofflouctech}, the proof above generalizes to closed manifolds under some technical assumptions on the distance function. This approach provides a structural description of the coefficients in the asymptotic expansion and suggests computational tools to determine them. 

\begin{thm}
\label{thm3}
Let $X$ be an $n$-dimensional compact smooth manifold and $\rd$ a distance function on $X$. Assume that $\rd^2$ is regular near the diagonal, in the sense of \cite[Definition 2.2]{gimpgofflouctech}), and let $\mu$ be the volume density associated with $\rd$ as in \cite[Section 2]{gimpgofflouctech}. Assume that either
\begin{itemize}
\item $\rd^2$ is smooth on $X\times X$; or
\item $X=G/H$ is a smooth compact homogeneous space, i.e., $G$ is a compact Lie group, and $\rd$ is $G$-invariant.
\end{itemize}
Then, uniformly for $\nu$ in any compact subset of $\mathbb{C}$ we have an asymptotic expansion 
$$\mathcal{M}_X(R,\nu)=\left(\frac{R^n}{n!\omega_n}\right)^{-\nu} \sum_{j=0}^\infty c_j(\nu,X)R^{-2j}+O(R^{-\infty}), \quad \mbox{as $\mathrm{Re}(R)\to \infty$},$$
where 
$$\begin{cases}
c_0(\nu,X)=\mathrm{vol}_n(X), \\
c_1(\nu,X)=-\nu\frac{n+1}{6}\int_X s\rd \mu,\\
c_j(\nu,X)=\int_X p_j(\nu, x)\rd \mu(x), \; j>1.
\end{cases}$$
The function $s$ is the scalar curvature, if $\rd$ is the geodesic distance, and in general is defined from $\rd$ as in \cite[Theorem 6.1]{gimpgofflouctech}. The expression $p_j$ is a universal polynomial in $\nu$ and the covariant derivatives of the Taylor expansion of $\rd^2$ along the diagonal. The polynomial $p_j$ is homogeneous of order $2j$ in  the curvature and its derivatives, where the curvature has order $2$ and its $l$-th derivative is of order $2+l$. 
\end{thm}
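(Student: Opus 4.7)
The plan is to handle the two listed cases via a common strategy: establish the asymptotic expansion first for $\mathfrak{m}_X(R)=\mathcal{M}_X(R,1)$ and then raise it to the $\nu$-th power. For the compact homogeneous case, Proposition~\ref{mxandmx} reduces the problem to expanding $\mathfrak{m}_X(R)=\int_X e^{-R\rd(eH,y)}\rd\mu(y)$. I would work in geodesic normal coordinates $v$ at $eH$, where $\rd(eH,y)=|v|$ and $\rd\mu=J(v)\,\rd v$ with Taylor expansion $J(v)=1+\sum_{k\geq 2}J_k(v)$ built from covariant derivatives of $\rd^2$ along the diagonal and $J_k$ homogeneous of degree $k$ in $v$. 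Polar coordinates reduce each term to $\int_0^\infty e^{-Rr}r^{n-1+k}\rd r\cdot\int_{S^{n-1}}J_k(\omega)\rd\omega$; the antipodal symmetry $\omega\mapsto-\omega$ kills odd $k$, so only $k=2j$ survive, giving $\mathfrak{m}_X(R)\sim (n!\omega_n/R^n)\sum_{j\geq 0}\tilde{c}_j(X)R^{-2j}$ with $\tilde{c}_0(X)=\mathrm{vol}_n(X)$. Proposition~\ref{mobvingpoweow} then produces the claimed expansion of $\mathcal{M}_X(R,\nu)=\mathfrak{m}_X(R)^\nu$, with $c_j(\nu,X)$ a polynomial in $\nu$ assembled from the universal polynomials $g_j$.

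For the smooth case, I would invoke the parameter-dependent pseudodifferential calculus from \cite{gimpgofflouctech} that already powers the proof of Theorem~\ref{thm1}. There, $R^{-\nu}\mathcal{Z}_X(R)^\nu$ is a holomorphic family (in $R$ and $\nu$) of elliptic pseudodifferential operators with parameter $R$. Its principal symbol in normal coordinates is $(n!\omega_n)^\nu(R^2+|\xi|^2)^{-(n+1)\nu/2}$, computed as the $\nu$-th power of the principal symbol of $R^{-1}\mathcal{Z}_X(R)$ via the holomorphic functional calculus of \cite[Chapter II]{shubinbook}; subleading symbols follow from standard Leibniz and composition rules and are polynomial in $\nu$ and in covariant derivatives of the Taylor expansion of $\rd^2$ along the diagonal. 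The antipodal parity argument from the homogeneous case transfers to the symbol expansion (the principal symbol is radial in $\xi$, the $\lambda$-resolvent construction and its Cauchy integral preserve parity in $\xi$), forcing the step of $R^{-2}$. Evaluating $\mathcal{Z}_X(R)^\nu$ on $1$ and pairing with $1$ reduces to integrating the local symbols at $\xi=0$ over $X$, producing an expansion of the claimed form with $c_0(\nu,X)=\mathrm{vol}_n(X)$ and $c_j(\nu,X)=\int_X p_j(\nu,x)\rd\mu(x)$, the polynomial $p_j$ inheriting the stated homogeneity from the natural scaling of covariant derivatives of $\rd^2$.

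To pin down $c_1(\nu,X)=-\nu\tfrac{n+1}{6}\int_X s\,\rd\mu$, I would combine \cite[Theorem 6.1]{gimpgofflouctech} (the $\nu=-1$ case) with the direct $\nu=1$ computation carried out as in the homogeneous argument above, after which the general formula follows from the observation that the first nontrivial coefficient of $t\mapsto t^\nu$ is exactly linear in $\nu$ (Proposition~\ref{mobvingpoweow}, $g_1(\nu,t_1)=\nu t_1$). The main obstacle is the symbolic argument for general $\nu$ in the non-homogeneous smooth case: justifying the step of $R^{-2}$ for the full expansion of the complex power and tracking the polynomial $\nu$-dependence of the subleading symbols through the resolvent construction. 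Once these are in place, the universality and homogeneity of $p_j$ for $j\geq 2$ follow from the scaling structure of the normal-form expansion of $\rd^2$ and the polynomial algebra of parameter-dependent symbolic calculus.
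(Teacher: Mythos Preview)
Your strategy for the smooth case coincides with the paper's: both localize near the diagonal, invoke the parameter-dependent pseudodifferential calculus of \cite{gimpgofflouctech} together with Shubin's construction of complex powers, and read off the structural properties of the coefficients from the symbolic calculus. The paper is terser, deferring the mechanics (including the $R^{-2}$ step) to \cite[Subsection 6.1]{gimpgofflouctech} rather than spelling out a parity argument.

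Where you diverge is in the homogeneous case and in pinning down $c_1$. You handle $G/H$ by the elementary route---Laplace expansion of $\mathfrak{m}_X(R)$ in normal coordinates, then Proposition~\ref{mxandmx} and Proposition~\ref{mobvingpoweow} to pass to $\mathfrak{m}_X(R)^\nu$---whereas the paper runs both cases uniformly through the localized operator $Q_X(R)$ (your route is only acknowledged as an aside after the statement). For $c_1$, the paper uses invariant theory (weight $2$ forces a multiple of $s$) and fixes the constant via Willerton's sphere computation \cite[Theorem 11]{will14}; you instead interpolate between $\nu=\pm 1$. Your approach is more explicit and self-contained; the paper's is shorter and avoids direct normal-coordinate computations.

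Two slips to correct. First, writing $\rd(eH,y)=|v|$ in normal coordinates is only valid for the geodesic distance; in general the hypothesis gives $\rd^2=|v|^2+O(|v|^3)$, so the exponent $-R\rd$ carries correction terms that must be fed into the Laplace expansion (the method still works, but the bookkeeping changes). Second, your appeal to $g_1(\nu,t_1)=\nu t_1$ from Proposition~\ref{mobvingpoweow} to conclude that $c_1(\nu,X)$ is linear in $\nu$ is a non sequitur in the non-homogeneous smooth case, since there $\mathcal{M}_X(R,\nu)\neq\mathfrak{m}_X(R)^\nu$ and that proposition does not apply. The linearity is nevertheless true: it comes from the symbolic fact that the first subleading symbol of $A^\nu$ in Shubin's construction is $\nu\, a_0^{\nu-1}a_1$ plus derivative terms linear in $\nu$, which you allude to but should cite in place of Proposition~\ref{mobvingpoweow}.
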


Note that in the special case when $G/H$ is a compact homogeneous space, the existence of an asymptotic expansion also follows from \cite[Proposition 3.2]{gimpgofflouctech} and Theorem \ref{mainthm2}.

\begin{proof}[Proof of Theorem \ref{thm3}]
Arguing as in \cite[Section 3]{gimpgofflouctech}, we see that 
\begin{equation}
\label{aldkandlkn}
\mathcal{M}_X(R,\nu)= {R^\nu} \langle Q_X(R)^\nu1,1\rangle_{L^2(X,\mu)}+O(R^{-\infty}),\quad\mbox{as $R\to \infty$},
\end{equation}
where $Q_X$ is the elliptic pseudodifferential operator with parameter defined by
$$Q_X(R)f(x):={R^{-1}}\int_X \chi(x,y)\mathrm{e}^{-R\rd(x,y)}f(y)\rd \mu(y).$$
Here $\chi\in C^\infty(X\times X)$ is a a function such that $\chi(x,y)=1$ sufficiently close to the diagonal $x=y$ and $\chi(x,y)=0$ on the singular support of $\rd^2$. {$Q_X$ thus localizes the integral kernel of $\mathcal{Z}_X$ in a neighborhood of the diagonal $\{(x,x) \in X\times X\}$. The localization affects the asymptotic expansion only by terms of $O(R^{-\infty})$ and therefore implies \eqref{aldkandlkn}. More precisely, if  $\rd^2$ is smooth on $X\times X$ this follows from \cite[Corollary 3.5]{gimpgofflouctech}, while if $G/H$ is a compact homogeneous space this is proven as \cite[Proposition 3.2]{gimpgofflouctech}. }

{For $Q_X$ we may now apply standard techniques for elliptic pseudodifferential ope\-rators with parameter. From \cite[Chapter II]{shubinbook} we obtain that the complex powers $Q_X(R)^\nu$ of $Q_X(R)$ form a holomorphic family in $\nu$ of elliptic pseudodifferential operators with parameter $R$ and that $Q_X(R)^\nu$ depends holomorphically on $R$.}

{The existence of an asymptotic expansion 
$$\mathcal{M}_X(R,\nu)=\left(\frac{R^n}{n!\omega_n}\right)^{-\nu} \sum_{j=0}^\infty c_j(\nu,X)R^{-2j}+O(R^{-\infty}) ,\quad \mbox{as $\mathrm{Re}(R)\to \infty$},$$
then follows from general properties of elliptic pseudodifferential operators with parameter, as in \cite[Subsection 6.1]{gimpgofflouctech}. The structural statements are a consequence of the symbolic computations for $Q_X(R)$ in \cite[Section 2]{gimpgofflouctech} and the symbolic construction of the complex powers \cite[Chapter II]{shubinbook}. Invariant theory, combined with \cite[Theorem 11]{will14}, implies that the first two polynomials $p_0, p_1$ are given by $p_0(\nu,x)=1$ and $p_1(\nu,x)=-\nu\frac{n+1}{6} s(x)$.}
\end{proof}

\section{Examples}

We consider two examples that fit in the general framework of the paper: the $p$-adic integers $\mathbb{Z}_p$ and the $n$-dimensional unit sphere $S^n$. While $\mathbb{Z}_p$ is totally disconnected and not a manifold, it forms a compact group and carries a natural distance function. This allows us to study its magnitude function and residues following the approach of Section \ref{lknland} and \cite{will14}. Meanwhile, $S^n$ is a smooth compact homogeneous space and therefore satisfies the assumptions in the earlier sections of this article. We illustrate how our approach recovers results for the magnitude, respectively residues, of $S^n$ in \cite{ohara23,will14}.

\subsection{$p$-adic integers}

For any prime $p$, the $p$-adic integers $\mathbb{Z}_p$ form a compact group with an invariant distance function defined from the valuation $|\cdot|_p$ and admit the Haar measure $\mathrm{d}x$. While the magnitude of $\mathbb{Z}_p$ is well-defined \cite{meckes,meckes2}, little is known about it. Its Brylinski beta function, however, is readily computed. From the relationship obtained in this article, we deduce novel, exotic behavior of the magnitude function. 

Indeed, for the Brylinski beta function we note
\begin{align*}
B_{\mathbb{Z}_p}(z)=\int_{\mathbb{Z}_p} |x|^z_p\mathrm{d}x=\sum_{k=0}^\infty \int_{p^k\mathbb{Z}_p\setminus p^{k+1}\mathbb{Z}_p} |x|^z_p\mathrm{d}x=\sum_{k=0}^\infty\left(\frac{1}{p^k}-\frac{1}{p^{k+1}}\right) p^{-kz}=\frac{p-1}{p-p^{-z}}.
\end{align*}
This formula shows that the poles of $B_{\mathbb{Z}_p}$ are simple and fill up the set 
$$-1+\frac{2\pi i}{\log(p)}\mathbb{Z}.$$
If we write $\mathfrak{m}_{\mathbb{Z}_p}(R)=\int_{\mathbb{Z}_p}\mathrm{e}^{-R|x|_p}\mathrm{d}x$ as above, we deduce from the location of the poles that  $\mathfrak{m}_{\mathbb{Z}_p}(R)$ does not admit an asymptotic expansion for $\mathrm{Re}(R)\to \infty$; indeed, if there was such an expansion, it would have to take the form 
$$\mathfrak{m}_{\mathbb{Z}_p}(R)=\sum_{z\in -1+\frac{2\pi i}{\log(p)}\mathbb{Z}} \Gamma(z)\frac{p-1}{p\log(p)} R^{z}+O(\mathrm{Re}(R)^{-\infty}).$$
This leads to a contradiction, as the latter asymptotic sum makes no sense because $\mathrm{Re}(z)$ does not go to $-\infty$ for $z$ in the set $-1+\frac{2\pi i}{\log(p)}\mathbb{Z}$.

We can, however, describe $\mathfrak{m}_{\mathbb{Z}_p}(R)$ in some detail. Proceeding as above, we compute  
\begin{align*}
\mathfrak{m}_{\mathbb{Z}_p}(R)=&\int_{\mathbb{Z}_p} \mathrm{e}^{-R|x|_p}\mathrm{d}x=\sum_{k=0}^\infty \int_{p^k\mathbb{Z}_p\setminus p^{k+1}\mathbb{Z}_p} \mathrm{e}^{-R|x|_p}\mathrm{d}x=\\
=&\sum_{k=0}^\infty\left(\frac{1}{p^k}-\frac{1}{p^{k+1}}\right) \mathrm{e}^{-Rp^{-k}}=(p-1)\sum_{k=0}^\infty p^{-k-1}  \mathrm{e}^{-Rp^{-k}}.
\end{align*}
While $\mathfrak{m}_{\mathbb{Z}_p}$ does not admit an asymptotic expansion, we note that $c_pR^{-1} \leq \mathfrak{m}_{\mathbb{Z}_p}(R) \leq C_pR^{-1}$ when $R>1$, for some constants $c_p,C_p>0$. This decay and the real part $-1$ of the poles of $B_{\mathbb{Z}_p}$ both reflect that the Minkowski dimension of $\mathbb{Z}_p$ is $1$ \cite{meckes}.  

\subsection{Spheres}

We consider the $n$-dimensional unit sphere $S^n\subseteq \R^{n+1}$ with its Euclidean distance function $\rd(x,y)=\|x-y\|$ and normalized volume measure $\mu$. Note that $S^n=O(n+1)/O(n)$ is a compact homogeneous space. On the one hand, the Brylinski beta function was computed in \cite[Example 2.3]{FV}:
\begin{equation}
\label{betaforspherh}
B_{S^{n}}(z)=2^{z+n}\pi^{-1/2} \frac{\Gamma\left(\frac{z+n}{2}\right)\Gamma\left(\frac{n+1}{2}\right)}{\Gamma\left(\frac{z}{2}+n\right)}.
\end{equation}
The reader should beware that we are using the normalized measure, while \cite{FV} do not normalize and use the standard measure induced on a hypersurface. 

On the other hand, the proof of \cite[Theorem 13]{will14} and Proposition \ref{mxandmx} show that for any $N\in \mathbb{N}$
$$\mathcal{M}_{S^n}(R,1)\equiv\mathfrak{m}_{S^n}(R)=\sum_{j=0}^N \frac{(-1)^j}{4^j}\begin{pmatrix} \frac{n}{2}-1\\ j\end{pmatrix} (n-1+2j)!R^{-n-2j} +O(R^{-n-2N-1}).$$
In the notation of Section \ref{sectionhomos}, we set 
$$a_{2j}:=\frac{(-1)^j}{4^j}\begin{pmatrix} \frac{n}{2}-1\\ j\end{pmatrix} (n-1+2j)!$$
and $a_{2j+1}=0$. Proposition \ref{mobvingpoweow} then implies the asymptotic expansion
$$\mathcal{M}_{S^n}(R,\nu)\sim \sum_{j=0}^\infty ((n-1)!)^\nu g_j\left(\nu,\frac{a_2}{(n-1)!},\frac{a_4}{(n-1)!},\ldots, \frac{a_{2j}}{a_0}\right)R^{-\nu n-2j}$$
 for $R\to +\infty$, with the polynomial $g_j=g_j(t_1,\ldots, t_j)$  given by
$$g_j(\nu,t_1,\ldots, t_j)=\sum_{\substack{\mathbbm{k}=(k_1,\ldots,k_j)\in \mathbb{N}^{j},\\ \sum_l lk_l=j}}\frac{\nu(\nu-1)\cdots (\nu-\sum_{l=1}^j k_l+1)}{\prod_{l=1}^j k_l!}\prod_{l=1}^j t_l^{k_l}.$$
From Remark \ref{aldnlknad} we thus conclude  
$$B_{S^n}(z)=\sum_{j=0}^N \frac{(-1)^j}{4^j}\begin{pmatrix} \frac{n}{2}-1\\ j\end{pmatrix}\frac{1}{z+n+ 2j} + \tilde{f}_N(z),$$  
where $\tilde{f}_{N}$ is holomorphic in the half-plane $\mathrm{Re}(z)> -n-2N-1$. This reproduces the poles and residues of the exact expression  \eqref{betaforspherh}.

\begin{appendix}

\section{Asymptotics of Mellin transforms}

The next result can be found in  \cite[Proposition 5.1]{gs} or in \cite[Proposition 3.6]{gimpgofflouc}. In the body of this article it is applied to the function $e=\mathfrak{m}_X$.

\begin{prop}
\label{relarlandlna}
Assume that $e(t)$ and $e(t^{-1})$ are holomorphic in $$V_{\theta_0} = \{t = re^{i\theta} :\ 2>r>0, |\theta|<\theta_0\} ,$$ for some $\theta_0 \in (0,\frac{\pi}{2})$, and $e(t) = O(|t|^a)$, $e(t^{-1}) = O(|t|^{b})$ for $t\to 0$ in $V_\delta$, for any $0<\delta<\theta_0$ and some $a,b\in \mathbb{R}$. 
Consider the function
$$f(s) = \int_0^\infty t^{s-1} e(t) \ \rd t,$$ 
holomorphic for $\mathrm{Re}(s)>-a$. Then the following properties are equivalent:
\begin{enumerate}
\item[(a)] $e(t)$ and $e(t^{-1})$ have asymptotic expansions for $t \to 0$, 
\begin{align*}
e(t) &= \sum_{j=0}^N a_{j} t^{\beta_j}+O(t^{\beta_{N+1}}), \ \ \mbox{where}\ \ \beta_0 \leq \beta_1\leq \cdots \leq \beta_N\leq \beta_{N+1}\\
e(t^{-1}) &\sim \sum_{j=0}^M  A_{j} t^{\gamma_j}+ O(t^{\gamma_{M+1}}), \ \ \mbox{where}\ \ \gamma_0 \leq \gamma_1\leq \cdots \leq \gamma_M\leq \gamma_{M+1} 
\end{align*}
uniformly for $t \in V_\delta$, for each $0<\delta<\theta_0$.\\
\item[(b)] $f(s)$ is meromorphic on the strip $-\beta_{N+1}\leq \mathrm{Re}(s)\leq \gamma_{M+1}$ with the singularity structure that
$$\Gamma(s) f(s) = \sum_{j=0}^N \frac{a_{j}}{\beta_j+s} + \sum_{j=0}^M \frac{A_{j}}{\gamma_j-s}+a(s),\ $$
where $a$ is holomorphic, and for each real constants $C_1>-\beta_{N+1}, C_2<\gamma_{M+1}$ and each $0<\delta<\theta_0$,
$$|f(s)| \leq C(C_1,C_2,\delta)e^{-\delta |\mathrm{Im}(s)|}, \ \ |\mathrm{Im}(s)|\geq 1, \ C_1\leq |\mathrm{Re}(s)| \leq C_2.$$
\end{enumerate}
Here $\Gamma$ denotes the Gamma function.
\end{prop}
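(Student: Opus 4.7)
The plan is to follow the classical dictionary between asymptotic expansions of a function at $0$ and $\infty$ and the meromorphic continuation plus vertical decay of its Mellin transform. I would organize the argument around three ingredients: splitting the Mellin integral at $t=1$, contour rotation inside the sector of holomorphy to obtain vertical decay, and Mellin inversion combined with contour shifting.

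First I would write $f(s)=f_0(s)+f_\infty(s)$ with $f_0(s)=\int_0^1 t^{s-1}e(t)\,\rd t$ and, after the substitution $t=u^{-1}$, $f_\infty(s)=\int_0^1 u^{-s-1}e(u^{-1})\,\rd u$. The two pieces are symmetric under $s\mapsto -s$ and $e\leftrightarrow e(\cdot^{-1})$, so it suffices to analyze $f_0$ and read off the conclusions for $f_\infty$ by symmetry. This reduces the proof to a one-sided statement about functions holomorphic in a sector at $0$ with an asymptotic expansion.

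For (a) $\Rightarrow$ (b), I would substitute $e(t)=\sum_{j=0}^N a_j t^{\beta_j}+r_{N+1}(t)$ with $r_{N+1}(t)=O(t^{\beta_{N+1}})$, evaluate the elementary integrals $\int_0^1 t^{s-1+\beta_j}\,\rd t=(s+\beta_j)^{-1}$ (initially for $\mathrm{Re}(s)>-\beta_j$ and then meromorphically), and note that the remainder $\int_0^1 t^{s-1}r_{N+1}(t)\,\rd t$ is holomorphic for $\mathrm{Re}(s)>-\beta_{N+1}$. Applying the same analysis to $f_\infty$ yields the poles at $s=\gamma_j$. The $\Gamma(s)$ prefactor in (b) is a book-keeping device: it absorbs normalizations such as $\Gamma(-\beta_j)$ so that the residues are expressed directly in terms of the asymptotic coefficients and handles the corner cases in which $\beta_j$ or $\gamma_j$ are non-positive integers, where $\Gamma$ itself has a pole.

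The vertical decay $|f(s)|\le Ce^{-\delta|\mathrm{Im}(s)|}$ is the key analytic input, and here the sectorial holomorphy of $e$ in $V_{\theta_0}$ enters decisively. The plan is to rotate the contour in $f_0$ from $[0,1]$ to the segment $\{re^{i\sigma}:0<r<1\}$ with $\sigma=-\mathrm{sgn}(\mathrm{Im}(s))\delta$ for $0<\delta<\theta_0$; the rotation is legitimate by Cauchy's theorem together with the bound $e(t)=O(|t|^a)$ at the origin. The substitution $t=re^{i\sigma}$ produces a factor $e^{i\sigma s}$ in the integrand whose modulus is $e^{-\delta|\mathrm{Im}(s)|}$, and the rotated integral converges uniformly on any strip $C_1\le\mathrm{Re}(s)\le C_2$. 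The analogous rotation for $f_\infty$ uses the bound $e(u^{-1})=O(|u|^b)$ at $u=0$.

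For (b) $\Rightarrow$ (a), I would apply Mellin inversion
$$e(t)=\frac{1}{2\pi i}\int_{c-i\infty}^{c+i\infty} t^{-s}f(s)\,\rd s$$
for $c$ in the original strip of convergence, and shift the contour to the left past the poles $s=-\beta_j$, whose residues contribute exactly the terms $a_j t^{\beta_j}$ of the asymptotic expansion of $e(t)$. The vertical decay bound ensures that the shifted contour integral contributes an error of order $t^{\beta_{N+1}}$, uniformly for $t$ in the closed subsector $V_\delta$ thanks to the identity $|t^{-s}|=|t|^{-\mathrm{Re}(s)}e^{\arg(t)\mathrm{Im}(s)}$. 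Shifting the contour to the right past $s=\gamma_j$ and reinterpreting via $u=t^{-1}$ produces the expansion of $e(t^{-1})$.

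The hard part will be the uniform vertical decay estimate together with its compatibility with the sector in which $t$ varies: one must pick the sign of $\sigma$ according to the sign of $\mathrm{Im}(s)$ and verify that the bound is uniform in $\mathrm{Re}(s)$ on a compact interval as well as in $t\in V_\delta$. Once this estimate is in place, both implications reduce to routine contour manipulations via Cauchy's theorem.
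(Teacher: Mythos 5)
The paper does not actually prove Proposition \ref{relarlandlna}; it quotes it from Grubb--Seeley \cite[Proposition 5.1]{gs} and \cite[Proposition 3.6]{gimpgofflouc}, and your outline follows the standard route behind that result: splitting the Mellin integral, contour rotation in the sector, and Mellin inversion with contour shifts. The pole-structure halves of your argument (termwise integration of the expansion for the continuation, residue pick-up under contour shifting for the converse) are essentially correct.

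The genuine gap is in the decay estimate, which you correctly identify as the key analytic input but then set up in a way that cannot work. First, the reduction ``it suffices to analyze $f_0$ and read off $f_\infty$ by symmetry'' fails precisely here: the individual pieces $f_0$ and $f_\infty$ do \emph{not} decay exponentially on vertical lines. For $e(t)=\mathrm{e}^{-t}$ one has $f_0(s)=\gamma(s,1)=\sum_{k\geq 0}\frac{(-1)^k}{k!(s+k)}\asymp |\mathrm{Im}(s)|^{-1}$, while $f(s)=\Gamma(s)$ does decay like $\mathrm{e}^{-\delta|\mathrm{Im}(s)|}$; the exponential decay is produced by cancellation between the two halves. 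Concretely, rotating $[0,1]$ onto the ray $\{r\mathrm{e}^{i\sigma}\}$ requires the connecting arc from $\mathrm{e}^{i\sigma}$ to $1$, on which the integrand is only $O(1)$, and it is exactly this arc that cancels against the corresponding arc for $f_\infty$. The correct argument rotates the \emph{entire} ray $(0,\infty)$ at once, which is legitimate because the hypotheses on $e(t)$ and $e(t^{-1})$ jointly give holomorphy of $e$ in the full unbounded sector $\{|\arg t|<\theta_0\}$ with power bounds at both ends; this yields $f(s)=\mathrm{e}^{i\sigma s}\int_0^\infty r^{s-1}e(r\mathrm{e}^{i\sigma})\,\rd r$ and hence the bound, but only in the initial strip $-a<\mathrm{Re}(s)<b$. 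Second, your sign is wrong: $|\mathrm{e}^{i\sigma s}|=\mathrm{e}^{-\sigma\,\mathrm{Im}(s)}$, so decay requires $\sigma=+\mathrm{sgn}(\mathrm{Im}(s))\,\delta$, whereas your choice $\sigma=-\mathrm{sgn}(\mathrm{Im}(s))\,\delta$ produces $\mathrm{e}^{+\delta|\mathrm{Im}(s)|}$. Third, extending the decay to the larger strip $-\beta_{N+1}<\mathrm{Re}(s)<\gamma_{M+1}$ cannot be done by just observing that the remainder is $O(t^{\beta_{N+1}})$ on $(0,1)$: you must subtract model terms that are themselves holomorphic in the sector with exponentially decaying Mellin transforms (e.g.\ $a_jt^{\beta_j}\mathrm{e}^{-t}$, whose transform is $a_j\Gamma(s+\beta_j)$), since sharp cutoffs destroy the sectorial holomorphy you need for the rotation. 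Finally, the $\Gamma(s)$ prefactor cannot be dismissed as book-keeping: multiplying $f(s)$ by $\Gamma(s)$ rescales the residues at $s=-\beta_j$ by $\Gamma(-\beta_j)$ and introduces the poles of $\Gamma$ at the non-positive integers (these are what produce the second sum $\sum_j B_j/(z-j)$ in Theorem \ref{thm2}(4)), so your statement of what is being proved and your direct residue computation do not yet match and need to be reconciled explicitly.
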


\end{appendix}

\end{document}